\numberwithin{equation}{section}
\theoremstyle{plain}
\newtheorem{thm}[equation]{Theorem}
\newtheorem*{thm*}{Theorem}
\newtheorem{cor}[equation]{Corollary}       
\newtheorem{lem}[equation]{Lemma}
\theoremstyle{definition}
\newtheorem{ex}[equation]{Example}
\newtheorem{rem}[equation]{Remark}
\newtheorem{chunk}[equation]{}
\newtheoremstyle{remboldstyle}
 {}{}{}{}{\bfseries}{.}{.5em}{{\thmname{#1 }}{\thmnumber{#2}}{\thmnote{ #3}}}
\theoremstyle{remboldstyle}
\newtheorem{slab}[equation]{}
\newtheoremstyle{nonemboldstyle}
 {}{}{}{}{\bfseries}{.}{.5em}{{\thmnote{#3}}}
\theoremstyle{nonemboldstyle}
\newcommand{\Hom}{\mathrm{Hom}}
\newcommand{\ihom}{\underline{\msf{hom}}}
\newcommand{\msf}[1]{\mathsf{#1}}
\newcommand{\mc}[1]{\mathcal{#1}}
\newcommand{\mrm}[1]{\mathrm{#1}}
\newcommand{\mbb}[1]{\mathbb{#1}}
\newcommand{\scr}[1]{\mathscr{#1}}
\newcommand{\p}{\mathfrak{p}}
\newcommand{\T}{\mathsf{T}}
\newcommand{\U}{\mathsf{U}}
\newcommand{\K}{\scr{K}}
\newcommand{\1}{\mathds{1}}
\newcommand{\Spc}{\msf{Spc}}
\newcommand{\Spch}{\msf{Spc}^{\msf{h}}}
\newcommand{\supp}{\msf{supp}}
\renewcommand{\L}{\scr{L}}
\renewcommand{\mod}[1]{\mathrm{Mod}_{#1}}
\newcommand{\C}{\mathsf{C}}
\newcommand{\D}{\mathsf{D}}
\renewcommand{\mod}[1]{\msf{mod}(#1)}
\newcommand{\Mod}[1]{\msf{Mod}(#1)}
\newcommand{\Flat}[1]{\msf{Flat}(#1)}
\newcommand{\ab}{\msf{Ab}}
\newcommand{\rlim}{\varinjlim}
\renewcommand{\t}{\text}
\renewcommand{\c}{\mrm{c}}
\newcommand{\op}{\mrm{op}}
\renewcommand{\Flat}[1]{\msf{Flat}(#1)}
\newcommand{\closed}[1]{\msf{KZg}^{\otimes}_{\msf{Cl}}(#1)}
\newcommand{\y}{\msf{y}}
\renewcommand{\bar}{\overline}
\newcommand{\hspec}[1]{\msf{Spc}^{\msf{h}}(#1)}
\renewcommand{\phi}{\varphi}
\newcommand{\Irr}[1]{\msf{IrrRk}(\T^\c)}
\newcommand{\zgk}{\msf{Zg}(\K)}
\newcommand{\yy}{\mathbbm{y}}
\begin{document}
\title{Definable functoriality of tensor-triangular spectra}
\begin{abstract}
We prove that the homological and Balmer spectra in tensor-triangular geometry are functorial in certain definable functors, thereby providing an alternative perspective on functoriality in tensor-triangular geometry from the viewpoint of purity, and generalising current results in the literature.
\end{abstract}
\subjclass[2020]{18G80, 18F99, 18E45}

\author{Isaac Bird}
\address[Bird]{Department of Algebra, Faculty of Mathematics and Physics, Charles University in Prague, Sokolovsk\'{a} 83, 186 75 Praha, Czech Republic}
\email{bird@karlin.mff.cuni.cz}
\author{Jordan Williamson}
\address[Williamson]{Department of Algebra, Faculty of Mathematics and Physics, Charles University in Prague, Sokolovsk\'{a} 83, 186 75 Praha, Czech Republic}
\email{williamson@karlin.mff.cuni.cz}
\maketitle

\section{Introduction}

Tensor-triangular (``tt'') geometry is the study of tensor-triangulated categories $\K$ from a geometric perspective via associated topological spaces, in analogy to the study of commutative rings through their Zariski spectra. There are two such associated spaces: the Balmer spectrum $\msf{Spc}(\K)$~\cite{BalmerSpec} and the homological spectrum $\Spch(\K)$~\cite{BalmerNil}. The former provides the universal solution to the problem of classifying (radical) thick $\otimes$-ideals of $\K$, whereas the latter parametrises abelian counterparts of residue fields of $\K$ and provides a nilpotence theorem. As such, understanding the Balmer and homological spectra of tt-categories is of interest across many fields: commutative algebra, representation theory, algebraic geometry, topology, and so on; see~\cite{Balmerttg} for some highlights. 

A standard yet powerful technique in tt-geometry is the use of descent~\cite{BCHSsurj, bhsz}. This relies upon the fact that the Balmer spectrum and the homological spectrum are functorial: suitable functors $\K \to \L$ induce continuous maps on the associated spectra. One can then use such maps to construct new points by comparison to well-understood examples, for instance by passage to residue fields.

In this short note, we provide a new perspective on the functoriality of the homological and Balmer spectra by utilising techniques from model theory. We show that our result generalises and recovers Balmer's result that the homological spectrum is functorial in geometric functors. Using that the Balmer spectrum is the Kolmogorov quotient of the homological spectrum~\cite{BHSComp}, we then deduce corresponding functoriality for the Balmer spectrum. Furthermore, throughout the paper we work with essentially small rigid tt-categories rather than with the compact objects in some ambient `big' category. As such, we reformulate some key features of the homological spectrum in this small setting. 

For ease of exposition in this introduction, we assume that we are in the world of rigidly-compactly generated tensor-triangulated categories. In \cite{BWHomSp}, it was illustrated how one can recover the homological spectrum (and thus Balmer spectrum) of a rigidly-compactly generated tt-category from its pure structure. This pure structure consists of precisely the triangles (resp., objects) which become short exact sequences (resp., injective) when viewed as cohomological functors, and as such reflects the abelian structure of the functor category in the triangulated setting. 

This connection between tt-geometry and model theory motivates the question of whether functors which preserve the pure structure of the categories, the \emph{definable functors} \cite{BWdef}, can also be used to provide functoriality of the homological spectrum. In this note we show that this indeed occurs. 

\begin{thm*}[{\ref{thm:functoriality}, \ref{thm:Balmerspectrumfunctoriality}}]
Let $F\colon \T \to \U$ be a definable functor between rigidly-compactly generated tt-categories. If the induced adjunction 
\[
\begin{tikzcd}
\Mod{\T^\c} \arrow[r, shift right = 0.5ex,swap,  "\bar{F}"] \arrow[r, leftarrow, shift left = 0.5ex, "\Lambda"]& \Mod{\U^\c}
\end{tikzcd}
\]
satisfies the projection formula and $\Lambda$ preserves cohomological functors, then there is a continuous map $\Spch(F)\colon\hspec{\T^\c}\to\hspec{\U^\c}$ given by
\[
\mc{B}\mapsto \Lambda^{-1}(\mc{B})\cap\mod{\U^\c}.
\]
Thus, by taking Kolmogorov quotients, $F$ induces a continuous map $\Spc(\T^{\c})\to\Spc(\U^{\c})$.
\end{thm*}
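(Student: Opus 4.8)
The plan is to prove the two numbered assertions in turn. For \ref{thm:functoriality} I would verify first that, for each homological prime $\mc{B}$ of $\T^\c$, the category $\mc{C}:=\Lambda^{-1}(\mc{B})\cap\mod{\U^\c}$ is again a homological prime of $\U^\c$ (a maximal proper Serre $\otimes$-ideal of the coherent functor category $\mod{\U^\c}$), and second that the resulting assignment is continuous for the homological support topology; for \ref{thm:Balmerspectrumfunctoriality} I would then pass to Kolmogorov quotients. I take as known: the description of $\hspec{\K}$ as the space of maximal proper Serre $\otimes$-ideals of $\mod{\K}$, whose closed sets are generated by the sets $\{\mc{B}:\yy(a)\notin\mc{B}\}$ for $a$ in $\K$; that Balmer's comparison map exhibits $\Spc(\K)$ as the Kolmogorov quotient of $\hspec{\K}$; and the structural features of the induced adjunction $\bar{F}\dashv\Lambda$ attached to a definable functor (see \cite{BWdef, BWHomSp}), namely exactness of $\bar{F}$ and $\Lambda$, the facts that $\bar{F}$ is monoidal, preserves the tensor unit, and restricts to $\mod{\T^\c}\to\mod{\U^\c}$, together with the two running hypotheses: the projection formula and preservation of cohomological functors by $\Lambda$.

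\emph{The image of the construction, and continuity.} Fix $\mc{B}\in\hspec{\T^\c}$ and set $\mc{C}:=\Lambda^{-1}(\mc{B})\cap\mod{\U^\c}$. Exactness of $\Lambda$ makes $\Lambda^{-1}(\mc{B})$ a Serre subcategory of $\Mod{\U^\c}$, so $\mc{C}$ is Serre in $\mod{\U^\c}$. For the $\otimes$-ideal property I would take $E\in\mc{C}$ and arbitrary $X\in\mod{\U^\c}$, write $X$ in terms of the essential image of $\bar{F}$ via $\otimes$, $\oplus$ and (co)kernels, and expand $\Lambda(E\otimes X)$ using the projection formula into objects tensored with $\Lambda(E)\in\mc{B}$, whence $E\otimes X\in\mc{C}$ since $\mc{B}$ is a $\otimes$-ideal. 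Properness, i.e. $\Lambda(\1_{\mod{\U^\c}})\notin\mc{B}$, follows because $\bar{F}$ preserves the unit, so the adjunction unit embeds $\1_{\mod{\T^\c}}$ into $\Lambda\bar{F}(\1_{\mod{\T^\c}})\cong\Lambda(\1_{\mod{\U^\c}})$; were the latter in $\mc{B}$, so would be $\1_{\mod{\T^\c}}$, against properness of $\mc{B}$. The hypothesis that $\Lambda$ preserves cohomological functors is what keeps these computations inside the class of functors that control the homological spectrum. The substantive point — and the step I expect to be the main obstacle — is \emph{maximality} of $\mc{C}$. Here I would use that a proper Serre $\otimes$-ideal is maximal precisely when every object outside it $\otimes$-generates the whole category: given $E\notin\mc{C}$ we have $\Lambda(E)\notin\mc{B}$, so by maximality of $\mc{B}$ the unit $\1_{\mod{\T^\c}}$ is a subquotient of a finite $\oplus/\otimes$-expression built from $\mc{B}$ and $\Lambda(E)$; applying the exact monoidal functor $\bar{F}$, using $\bar{F}(\1)\cong\1$, the counit $\bar{F}\Lambda(E)\to E$, and the projection formula to keep the $\mc{B}$-contribution inside $\mc{C}$, one obtains $\1_{\mod{\U^\c}}$ in the Serre $\otimes$-ideal generated by $\mc{C}$ and $E$. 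Making this transfer precise — checking that $\bar{F}(\mc{B})$ and the counit interact with $\mc{C}$ exactly as this argument demands, without leaving the relevant class of cohomological functors — is precisely what the two running hypotheses are tailored to. Continuity is then routine: from the defining formula, for $a\in\U^\c$ the preimage under $\Spch(F)$ of $\{\mc{C}':\yy(a)\notin\mc{C}'\}$ equals $\{\mc{B}:\Lambda(\yy(a))\notin\mc{B}\}$, and since $\yy(a)$ is cohomological and $\Lambda$ preserves cohomological functors, $\Lambda(\yy(a))$ lies in the class for which such sets are closed in $\hspec{\T^\c}$.

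\emph{The Balmer spectrum.} Write $\phi_\K\colon\hspec{\K}\to\Spc(\K)$ for Balmer's comparison map, which exhibits $\Spc(\K)$ as the Kolmogorov quotient of $\hspec{\K}$. Any continuous map preserves topological indistinguishability — from $f(\bar{A})\subseteq\bar{f(A)}$ one gets that $\bar{\{x\}}=\bar{\{y\}}$ forces $\bar{\{f(x)\}}=\bar{\{f(y)\}}$ — so $\Spch(F)$ carries indistinguishability classes into indistinguishability classes and therefore factors uniquely through $\phi_{\T^\c}$ as $\phi_{\U^\c}\circ\Spch(F)=\Spc(F)\circ\phi_{\T^\c}$ for a unique map $\Spc(F)\colon\Spc(\T^\c)\to\Spc(\U^\c)$. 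Its continuity follows from the universal property of the quotient topology, $\phi_{\T^\c}$ being a topological quotient and $\phi_{\U^\c}\circ\Spch(F)$ continuous. This is the required map, compatible with the comparison maps by construction.
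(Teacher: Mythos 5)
Your plan diverges from the paper's argument, and the divergence is where the gaps appear. The paper never directly verifies that $\Lambda^{-1}(\mc{B})\cap\mod{\U^\c}$ satisfies the axioms of a maximal proper Serre $\otimes$-ideal. Instead it passes through the monoidal fundamental correspondence: a homological prime corresponds to a \emph{simple} $\otimes$-closed definable subcategory of $\Flat{\K}$; the central lemma (\cref{lem:imagesimple}) shows that $\msf{pure}(F\mc{D})$ is again a simple $\otimes$-closed definable subcategory when $\mc{D}$ is, using the weak-ring property of the injective $J_{\mc{S}}$ from \cref{weakrings}, the non-vanishing of $J_\mc{S}\otimes J_\mc{S}$, and the projection formula. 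Only afterwards (\cref{cor:alternativeform}) is the explicit formula $\Lambda^{-1}(\mc{B})\cap\mod{\L}$ extracted, and that step itself invokes an external result (\cite[Cor.\ 4.13]{BWdef}). Continuity in the paper also goes through a nontrivial conservativity statement (\cite[Rem.\ 13.12]{bchs}) for $F$ on objects of the form $\Lambda(f)\otimes J_\mc{B}$, rather than the direct identification you propose.

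There are concrete errors in your sketch that would block the route you take. You list among your ``known facts'' that $\bar F$ is exact, monoidal, and preserves the tensor unit. None of these hold under the stated hypotheses: a definable functor preserves direct limits, products, and \emph{pure}-exact sequences, but is not exact on arbitrary short exact sequences; $F$ is only \emph{lax} monoidal, so there is merely a map $\1\to F\1$, not an isomorphism $\bar F(\1)\cong\1$. Your properness argument and, crucially, your proposed maximality argument both lean on these unavailable properties (you apply $\bar F$ to an exact $\oplus/\otimes$-expression and use $\bar F(\1)\cong\1$). Likewise, the $\otimes$-ideal step is claimed by ``writing $X$ in terms of the essential image of $\bar F$''; there is no reason a general object of $\mod{\U^\c}$ is built from that essential image, and the projection formula on its own relates $FX\otimes Y$ to $F(X\otimes\Lambda Y)$ but does not give a direct expansion of $\Lambda(E\otimes X)$. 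You flag maximality as ``the main obstacle'' without a workable mechanism — and indeed this is exactly why the paper routes the argument through simple $\otimes$-closed definable subcategories and the weak-ring structure of $J_\mc{B}$, which makes the simplicity transfer tractable without exactness or strong monoidality of $F$.

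The final paragraph on Kolmogorov quotients is fine and matches the paper's \cref{thm:Balmerspectrumfunctoriality}, modulo the fact that the input continuous map has not yet been established.
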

We note that the above theorem actually holds in more generality: one may replace the rigidly-compactly generated tt-categories by the categories of cohomological functors on essentially small rigid tt-categories, see \cref{setup} for more details. We also remark that one can describe the induced map $\Spch(F)$ in terms of the original functor $F$ rather than in terms of $\Lambda$, c.f., \cref{thm:functoriality} and \cref{cor:alternativeform}.

The above theorem generalises the established functoriality of Balmer \cite[Theorem 5.10]{Balmerhomsupp}, since the right adjoint to a geometric functor is a definable functor which satisfies the conditions of the above theorem, see \cref{cor:generalise}. In particular, this motivates why the above theorem provides a \emph{co}variant functoriality in definable functors, in contrast to the usual \emph{contra}variant functoriality in geometric functors.

We envisage that this perspective to functoriality through purity may be useful in other settings such as that of non-rigid tt-categories.

\subsection*{Acknowledgements}
Both authors were supported by the project PRIMUS/23/SCI/006 from Charles University, and by Charles University Research Centre program No. UNCE/24/SCI/022.

\section{Preliminaries}
\begin{slab}[Rigid tt-categories and modules]\label{slab:modules}
Let $\K$ be an essentially small rigid tt-category: in short, this means that $\K$ is a triangulated category with a compatible closed symmetric monoidal structure $(\otimes, \ihom, \1)$ for which every object is strongly dualisable (i.e., the natural map $\ihom(k,\1) \otimes k' \to \ihom(k,k')$ is an isomorphism for all $k,k' \in \K$). We let $\Mod{\K}$ denote the category of additive functors $\K^{\op}\to\ab$, which is a locally coherent Grothendieck abelian category. We write $\yy\colon \K \to \Mod{\K}$ for the Yoneda embedding given by $\yy k \coloneqq \Hom_\K(-,k)$. The full abelian subcategory of finitely presented objects in $\Mod{\K}$ is denoted by $\mod{\K}$, and consists of the functors $f$ which have a presentation
\[
\yy k \to \yy k' \to f\to 0.
\]
The Yoneda embedding gives an additive equivalence $\yy\colon\K\xrightarrow{\sim} \msf{proj}(\K)$, where $\msf{proj}(\K)$ denotes the finitely presented projective objects in $\Mod{\K}$. We write $\Flat{\K}$ for the subcategory of $\Mod{\K}$ consisting of the cohomological functors; the terminology is justified by the fact that $\Flat{\K}=\msf{Ind}(\K)=\rlim\msf{proj}(\K)$ where $\rlim$ denotes the closure under direct limits. We note that any injective object in $\Mod{\K}$ is flat by \cite[Proposition 7.1]{BelFreyd} since $\K$ is a coherent category in the sense of \cite[\S 4]{BelFreyd}.

As $\K$ is closed symmetric monoidal, $\Mod{\K}$ inherits a closed symmetric monoidal structure via Day convolution, see~\cite[Remark 2.4]{BKSrum}. The monoidal product on $\Mod{\K}$ is the unique functor $-\otimes-\colon \Mod{\K}\times \Mod{\K}\to\Mod{\K}$ which commutes with colimits in both variables, and for which $\yy\colon \K \to \Mod{\K}$ is strong symmetric monoidal. We again write $\ihom$ for the internal hom on $\Mod{\K}$ and also write $\mbb{D} = \ihom(-,\yy\1)$ for the functional dual.
\end{slab}

\begin{slab}[Purity]
Let $\C$ be a finitely accessible category with products (for example, $\Mod{\K}$ or $\Flat{\K}$).
A short exact sequence $0\to L\to M\to N\to 0$ in $\C$ is called \emph{pure} if the induced sequence \[0\to \Hom_{\C}(f,L)\to \Hom_{\C}(f,M)\to \Hom_{\C}(f,N)\to 0\] is exact for all $f\in\msf{fp}(\C)$, in which case $L\to M$ is called a \emph{pure monomorphism}, and $L$ a \emph{pure subobject} of $M$. The terms \emph{pure epimorphism} and \emph{pure quotient} are defined correspondingly for $M\to N$. An object $X\in\C$ is \emph{pure injective} if any pure monomorphism $X\to Y$ splits.

A full subcategory $\mc{D}\subseteq\C$ is \emph{definable} if it is closed under pure subobjects, direct limits, and products. The set $\msf{pinj}(\C)$ of isomorphism classes of indecomposable pure injective objects in $\C$ underlies a topological space called the \emph{Ziegler spectrum} of $\C$, denoted $\msf{Zg}(\C)$. The closed sets are given by $\mc{D}\cap\msf{pinj}(\C)$ where $\mc{D}$ is a definable subcategory of $\C$. If $\mc{X}\subseteq\C$, we let $\msf{Def}(\mc{X})$ denote the smallest definable subcategory of $\C$ containing $\mc{X}$.

The category $\Flat{\K}$ is a definable subcategory of $\Mod{\K}$, and for brevity we write $\zgk$ for the Ziegler spectrum of $\Flat{\K}$.
\end{slab}

\begin{slab}[Definable functors]
Given two finitely accessible categories $\C,\D$ with products, a functor $F\colon\C\to\D$ is \emph{definable} if it preserves direct limits and products. Definable functors preserve pure exact sequences and pure injective objects, see \cite[\S 13]{dac}. For the purpose of this paper, the key example is that for any $f \in \mod{\K}$, the functor $f \otimes -\colon \Mod{\K} \to \Mod{\K}$ is definable. This is easily seen by reduction to the case $f = \yy k$ for $k \in \K$.
\end{slab}

\begin{slab}[The fundamental correspondence]\label{fundamentalcorrespondence}
As $\Flat{\K}$ is a finitely accessible category, we can consider the category $\Mod{\msf{proj}(\K)}$ of additive functors $\msf{proj}(\K)^\op \to \ab$, and the restricted Yoneda functor $\y\colon\Flat{\K}\to\Mod{\msf{proj}(\K)}$ given by $\y\colon X\mapsto \Hom_{\Mod{\K}}(-,X)\vert_{\msf{proj}(\K)}$. 

There is, see \cite[\S 8]{dac} or \cite[Theorem 12.2.2]{Krbook}, an order reversing bijection between definable subcategories $\mc{D}\subseteq\Flat{\K}$ (equivalently, closed subsets of $\zgk$) and Serre subcategories $\mc{S}\subseteq \mod{\msf{proj}(\K)}$, which is given by the assignments
\[
\mc{D}\mapsto\{f \in \mod{\msf{proj}(\K)}:\Hom(f,\y X)=0 \t{ for all }X\in\mc{D}\}
\]
and 
\[
\mc{S}\mapsto \{X\in\Flat{\K}:\Hom(f,\y X)=0 \t{ for all }f\in\mc{S}\}.
\]
As $\yy\colon\K\to\msf{proj}(\K)$ is an equivalence, the restriction functor $\mod{\msf{proj}(\K)}\to\mod{\K}$, given by $F\mapsto F\circ \yy$, is also an equivalence, and therefore, combining this with the above bijection, we obtain an order reversing bijection between definable subcategories $\mc{D}$ of $\Flat{\K}$ and Serre subcategories $\mc{S}$ of $\mod{\K}$. These are explicitly given by 
\[
\mc{D}\mapsto \scr{S}(\mc{D})\coloneqq\{f\in\mod{\K}:\Hom(f,X)=0 \t{ for all }X\in\mc{D}\}
\]
and
\[
\mc{S}\mapsto \scr{D}(\mc{S})\coloneqq\{X\in\Flat{\K}:\Hom(f,X)=0 \t{ for all }f\in\mc{S}\}.
\]
\end{slab}

\begin{slab}[The monoidal fundamental correspondence]\label{slab:monoidalfundamental}
A Serre subcategory $\mc{S}\subseteq\mod{\K}$ is a \emph{Serre $\otimes$-ideal} if for any $f \in \mc{S}$ and $g\in\mod{\K}$ one has $f\otimes g\in\mc{S}$. A definable subcategory $\mc{D}\subseteq \Flat{\K}$ is \emph{$\otimes$-closed} if for any $X\in\mc{D}$ and $Y\in\Flat{\K}$ one has $X\otimes Y\in\mc{D}$. Note that $\mc{D}$ is a $\otimes$-closed definable subcategory if and only if $X\otimes \yy k\in\mc{D}$ for all $k \in \K$ since $\mc{D}$ is closed under direct limits. Given $\mc{X} \subseteq \Flat{\K}$, we write $\msf{Def}^\otimes(\mc{X})$ for the smallest $\otimes$-closed definable subcategory of $\Flat{\K}$ containing $\mc{X}$. One readily checks that 
\[\msf{Def}^\otimes(\mc{X}) = \msf{Def}(\yy k \otimes X : k \in \K, X \in \mc{X}).\]
Since $\scr{K}$ is rigid, the bijection of \cref{fundamentalcorrespondence} given by $\scr{S}(-)$ and $\scr{D}(-)$ restricts to give a bijection between Serre $\otimes$-ideals of $\mod{\K}$ and $\otimes$-closed definable subcategories of $\Flat{\K}$. For example, if $\mc{S}\subseteq\mod{\K}$ is a Serre $\otimes$-ideal, then $\scr{D}(\mc{S})$ is a $\otimes$-closed definable subcategory since for $X \in \scr{D}(\mc{S})$ and $f \in \mc{S}$ we have
\[
\Hom_{\Mod{\K}}(f,\yy k\otimes X) \simeq \Hom_{\Mod{\K}}(f\otimes \mbb{D}(\yy k), X)=0
\]
as $f\otimes \mbb{D}(\yy k)\in\scr{S}$. The other direction is similar.
\end{slab}

\begin{rem}
Without an assumption of rigidity, one still obtains a bijection between Serre $\otimes$-ideals of $\mod{\K}$ and definable coideals of $\Flat{\K}$, that is, definable subcategories $\mc{D}$ such that if $X\in\mc{D}$ and $k\in\K$ one has $\ihom(\yy k,X)\in\mc{D}$. However, it is no longer clear whether there is any assignment between $\otimes$-closed definable subcategories of $\Flat{\K}$ and a type of Serre subcategory of $\mod{\K}$ that respects a closure property inherited from the monoidal structure.
\end{rem}

\begin{slab}[Injectives associated to Serre $\otimes$-ideals]
As illustrated in \cite[\S3]{BKSrum}, associated to any Serre $\otimes$-ideal $\mc{S}$ is a localisation $Q_{\mc{S}}\colon\Mod{\K}\to \Mod{\K}/\rlim\mc{S}$, which is a strong symmetric monoidal exact functor that admits a right adjoint $R_{\mc{S}}\colon\Mod{\K}/\rlim\mc{S}\to\Mod{\K}$. Letting $\alpha\colon Q_{\mc{S}}\yy\1\hookrightarrow\mbb{E}_{\mc{S}}$ denote the injective hull of $Q_{\mc{S}}\yy \1\in\Mod{\K}/\rlim\mc{S}$, applying $R_{\mc{S}}$ gives an injective object $J_{\mc{S}}\coloneqq R_{\mc{S}}\mbb{E}_{\mc{S}}\in\Mod{\K}$. Since $R_\mc{S}$ is fully faithful, we have $Q_\mc{S}J_\mc{S} = \mbb{E}_\mc{S}$. We note that $J_{\mc{S}}\in\scr{D}(\mc{S})$: if $f\in\mc{S}$ then \[\Hom(f,J_{\mc{S}})=\Hom(f,R_{\mc{S}}\mbb{E}_{\mc{S}})\simeq \Hom(Q_{\mc{S}}(f),\mbb{E}_{\mc{S}})=0.\]
\end{slab}

The Serre $\otimes$-ideal $\mc{S}$ can then be recovered from the injective object $J_\mc{S}$. This was proved in the rigidly-compactly generated setting in \cite[Theorem 3.5]{BKSrum}.

\begin{lem}\label{lem:recoverkernel}
Let $\mc{S}$ be a Serre $\otimes$-ideal of $\mod{\K}$. Then $\rlim\mc{S}=\msf{Ker}(-\otimes J_{\mc{S}})$, and as such $\mc{S} = \msf{Ker}(- \otimes J_\mc{S}) \cap \mod{\K}$.
\end{lem}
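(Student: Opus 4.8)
The plan is to prove the two asserted equalities in turn, the first being the substantive one. We must show $\rlim\mc{S} = \msf{Ker}(-\otimes J_\mc{S})$; the second statement then follows by intersecting with $\mod{\K}$, since every finitely presented object lies in $\rlim\mc{S}$ if and only if it already lies in $\mc{S}$ (as $\mc{S}$ is closed under direct limits within $\mod{\K}$ — more precisely, $\rlim\mc{S} \cap \mod{\K} = \mc{S}$ because $\mc{S}$ is a Serre subcategory and a finitely presented subquotient of a direct limit of objects of $\mc{S}$ is again in $\mc{S}$).

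For the inclusion $\rlim\mc{S} \subseteq \msf{Ker}(-\otimes J_\mc{S})$: first I would check that $f \otimes J_\mc{S} = 0$ for $f \in \mc{S}$. Since $J_\mc{S} = R_\mc{S}\mbb{E}_\mc{S}$ is injective and lies in $\scr{D}(\mc{S})$, and $\scr{D}(\mc{S})$ is $\otimes$-closed with associated Serre $\otimes$-ideal $\mc{S}$, for any $g \in \mod{\K}$ the rigidity adjunction gives $\Hom(g, (f\otimes J_\mc{S})) \simeq \Hom(g \otimes \mbb{D}(f'), J_\mc{S})$ after writing $f$ in terms of a dualisable generator — more directly, since $f\otimes J_\mc{S}$ is again flat (as $J_\mc{S}\in\Flat{\K}$ and $\Flat{\K}$ is $\otimes$-closed being the maximal definable subcategory), and testing against finitely presented objects via $\Hom(g, f\otimes J_\mc{S})\simeq\Hom(g\otimes\mbb{D}\mc{}\ldots)$ lands in $\Hom(-,J_\mc{S})$ evaluated on a Serre $\otimes$-ideal, hence vanishes. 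This shows $\mc{S}\subseteq\msf{Ker}(-\otimes J_\mc{S})$; since $-\otimes J_\mc{S}$ is definable (as $J_\mc{S}\in\Flat{\K}$, the functor preserves direct limits and products) its kernel is a definable subcategory, so it contains $\msf{Def}(\mc{S}) = \rlim\mc{S}$.

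For the reverse inclusion $\msf{Ker}(-\otimes J_\mc{S})\subseteq\rlim\mc{S}$, which I expect to be the main obstacle: I would use the localisation $Q_\mc{S}\colon\Mod{\K}\to\Mod{\K}/\rlim\mc{S}$ together with the fact that $\mbb{E}_\mc{S}$ is the injective hull of $Q_\mc{S}\yy\1$. Given $X$ with $X\otimes J_\mc{S} = 0$, apply $Q_\mc{S}$; since $Q_\mc{S}$ is strong symmetric monoidal one gets $Q_\mc{S}X \otimes Q_\mc{S}J_\mc{S} = Q_\mc{S}X\otimes\mbb{E}_\mc{S} = 0$ in the quotient. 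Now $\mbb{E}_\mc{S}$ contains $Q_\mc{S}\yy\1$ as a subobject, and $-\otimes-$ in the quotient is right exact (being a left adjoint in each variable), so $Q_\mc{S}X\otimes Q_\mc{S}\yy\1 = Q_\mc{S}X$ is a subquotient... more carefully, $Q_\mc{S}\yy\1\hookrightarrow\mbb{E}_\mc{S}$ need not give an injection after $\otimes Q_\mc{S}X$, so instead I would exploit that $\mbb{E}_\mc{S}$ is an injective cogenerator-type object for the relevant piece: the essential extension property means $\Hom(-,\mbb{E}_\mc{S})$ detects nonvanishing of objects built from $Q_\mc{S}\yy\1$. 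Concretely, if $Q_\mc{S}X\neq 0$ then $\Hom(Q_\mc{S}X, \mbb{E}_\mc{S}')\neq 0$ for $\mbb{E}_\mc{S}'$ an injective hull of $Q_\mc{S}X$; tensoring and using rigidity of the unit one relates this back to $Q_\mc{S}X\otimes\mbb{E}_\mc{S}$ being nonzero, a contradiction. Hence $Q_\mc{S}X = 0$, i.e. $X\in\rlim\mc{S}$. The delicate point is controlling the interaction of the internal hom/dual with the localisation functor, and verifying that $\mbb{E}_\mc{S}$ genuinely witnesses nonvanishing after tensoring — this is where the argument of \cite[Theorem 3.5]{BKSrum} will need to be adapted from the rigidly-compactly generated setting to the present small/flat-functor setting, using that $R_\mc{S}$ is fully faithful and that $\yy\1$ is the monoidal unit so that tensoring with $Q_\mc{S}\yy\1$ is the identity.
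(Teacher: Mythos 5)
Your proposal contains genuine gaps in both inclusions, and at the points where it is precise it diverges from the paper's strategy, so let me address each half.

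\textbf{The inclusion $\rlim\mc{S}\subseteq\msf{Ker}(-\otimes J_\mc{S})$.} The paper agrees with you that it suffices (by definability/direct-limit-preservation of $-\otimes J_\mc{S}$) to check $f\otimes J_\mc{S}=0$ for $f\in\mc{S}$, but your argument for that vanishing does not work. You invoke ``the rigidity adjunction'' $\Hom(g,f\otimes J_\mc{S})\simeq\Hom(g\otimes\mbb{D}(f'),J_\mc{S})$, but an arbitrary finitely presented object $f\in\mod{\K}$ is \emph{not} dualisable — only the representables $\yy k$ are, since $\yy\colon\K\xrightarrow{\sim}\msf{proj}(\K)$ and the monoidal structure on $\Mod{\K}$ is Day convolution — so $\mbb{D}(f)$ cannot be used in this way. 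The auxiliary claim that $f\otimes J_\mc{S}$ is flat is also false: $f\otimes J_\mc{S}$ is a direct limit of objects $f\otimes\yy k_i\in\mod{\K}$, which are finitely presented but not in $\msf{proj}(\K)$. (Indeed, if one pushes your line of reasoning as far as it goes, one only gets $f\otimes J_\mc{S}\in\rlim\mc{S}$, not that it vanishes.) The paper's actual device is the four-term exact sequence $0\to\msf{ker}(\eta)\to\yy\1\xrightarrow{\eta}J_\mc{S}\to\msf{coker}(\eta)\to 0$, which serves as the abelian replacement for the triangle $\Delta$ appearing in \cite[Proposition~3.3]{BKSrum}; one then adapts that proof, using in particular the weak ring structure on $J_\mc{S}$ recalled in \cref{weakrings}.

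\textbf{The inclusion $\msf{Ker}(-\otimes J_\mc{S})\subseteq\rlim\mc{S}$.} You correctly identify that the na\"ive approach — tensoring the mono $Q_\mc{S}\yy\1\hookrightarrow\mbb{E}_\mc{S}$ with $Q_\mc{S}X$ — breaks because monomorphisms need not be preserved, but you do not actually supply a fix; the remainder is a description of the kind of argument one would hope for (``$\mbb{E}_\mc{S}$ genuinely witnesses nonvanishing after tensoring'') rather than a proof. The paper's route here is different and more concrete: given $M\in\Mod{\K}$, form the composite $Y\twoheadrightarrow M\hookrightarrow X$ where $Y$ is projective and $X$ is the injective envelope of $M$, which is flat by the coherence of $\K$ (\cref{slab:modules}); this exhibits $M$ as the image of a morphism between \emph{flat} objects, after which the argument of \cite[Theorem~3.5]{BKSrum} applies essentially verbatim. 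This reduction step is the key adaptation to the small setting, and it is missing from your proposal.

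So the overall conclusion is that you have located where the difficulties lie, but neither of your two arguments closes the gap, and the actual proof uses two devices (the exact sequence replacing the BKS triangle, and realising $M$ as an image of flats) that do not appear in your proposal.
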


\begin{proof}
The second claim follows from the first by intersecting with $\mod{\K}$ so it suffices to prove the first. Since most of the proof of \cite[Theorem 3.5]{BKSrum} carries over to this setting, we only give a sketch. For the implication $\msf{Ker}(-\otimes J_{\mc{S}})\subseteq\rlim\mc{S}$, we note that if $M\in\Mod{\K}$, then there is a morphism $f\colon X\to Y$ in $\Flat{\K}$ such that $M\simeq \msf{Im}(f)$. Indeed, we can consider the composition $Y\twoheadrightarrow M\hookrightarrow X$ where $Y\in\msf{Proj}(\K)$ is projective, and $X$ is the injective envelope of $M$, which is flat by the discussion in \cref{slab:modules}. From this point the proof of the implication is the same as the corresponding at \cite[Theorem 3.5]{BKSrum}.

For the implication $\rlim\mc{S}\subseteq\msf{Ker}(-\otimes J_{\mc{S}})$, it is sufficient, since $J_{\mc{S}}$ is flat, to restrict to finitely presented objects. At this point, one adapts \cite[Proposition 3.3]{BKSrum}. Define a map $\eta\colon\yy \1\to J_{\mc{S}}$ via the composition
\[
\begin{tikzcd}
\yy \1 \arrow[r, "\t{unit}"]& R_{\mc{S}}Q_{\mc{S}}\yy \1 \arrow[r, "R_{\mc{S}}\alpha"] & J_{\mc{S}}= R_{\mc{S}}\mbb{E}_{\mc{S}}
\end{tikzcd}
\]
and consider the exact sequence
\[
0\to \msf{ker}(\eta) \to \yy \1\xrightarrow{\eta} J_{\mc{S}}\to \msf{coker}(\eta)\to 0
\]
in $\Mod{\K}$, which is the replacement for the triangle $\Delta$ in \cite[Proposition 3.3]{BKSrum} and its proof. From this point, the adaptation of said proof is straightforward, and the conclusion follows.
\end{proof}

\begin{slab}[Weak rings]\label{weakrings}
    An object $R \in \Mod{\K}$ is a \emph{weak ring} if there is a map $\eta\colon \yy\1 \to R$ such that $R \otimes \eta\colon R \to R \otimes R$ is a split monomorphism. Note that if $R$ is a weak ring then $R \otimes R$ is non-zero. Given any Serre $\otimes$-ideal $\mc{S}$ of $\mod{\K}$, the associated injective object $J_\mc{S}$ is a weak ring. Indeed, we have a map $\eta\colon \yy\1 \to J_\mc{S}$ defined as the composite
    \[\yy\1 \xrightarrow{\mrm{unit}} R_\mc{S}Q_\mc{S}\yy\1 \xhookrightarrow{R_\mc{S}(\alpha)} J_\mc{S}.\] Since $J_\mc{S}$ is injective and hence flat, the functor $Q_\mc{S}(J_\mc{S} \otimes -) \simeq \mbb{E}_\mc{S} \otimes Q_\mc{S}(-)$ is exact so we obtain a monomorphism
    \[\mbb{E}_\mc{S} \xrightarrow{\sim} \mbb{E}_\mc{S} \otimes Q_{\mc{S}}\yy\1 \hookrightarrow \mbb{E}_\mc{S} \otimes \mbb{E}_\mc{S}.\] This is a monomorphism out of an injective and hence splits. Now
    \[\Hom(-,J_\mc{S}) = \Hom(-, R_\mc{S}(\mbb{E}_\mc{S})) = \Hom(Q_\mc{S}(-),\mbb{E}_\mc{S})\] so this retraction lifts to a map $J_\mc{S} \otimes J_\mc{S} \to J_\mc{S}$ splitting $J_\mc{S} \otimes \eta$ as required.
\end{slab}

\begin{slab}[The homological spectrum]\label{homprimes}
Following \cite{BalmerNil}, a \emph{homological prime} of $\K$ is a maximal proper Serre $\otimes$-ideal of $\mod{\K}$. The set of homological primes of $\K$ is denoted $\hspec{\K}$, and it is endowed with a topology with a basis of closed sets given by 
\[
\supp^{h}(k)\coloneqq \{\mc{B}\in\hspec{\K}:\yy k\not \in \mc{B}\}.
\]
as $k$ ranges over $\K$.
\end{slab}

\begin{rem}
    One can conceptualise the relationship between the homological spectrum and definable subcategories as follows. Write $\msf{Zg}^{\otimes}(\K)$ for the topological space whose points are the isomorphism classes of indecomposable pure injective objects of $\Flat{\K}$, and whose closed sets are those of the form $\mc{D}\cap\msf{pinj}(\Flat{\K})$, where $\mc{D}\subseteq\Flat{\K}$ is a $\otimes$-closed definable subcategory. There is a homeomorphism $\Spch(\K) \simeq \closed{\K}^{\msf{GZ}}$ where the latter consists of the closed points of the Kolmogorov quotient of $\msf{Zg}^\otimes(\K)$ equipped with the Gabriel-Zariski topology, see \cite{BWHomSp}. In loc. cit. this is proved when $\K = \T^\c$ for some rigidly-compactly generated tt-category $\T$. One can easily adapt the arguments to give the above claim since there is a homeomorphism $\msf{Zg}^\otimes(\T) \simeq \msf{Zg}^\otimes(\msf{Flat}(\T^\c))$.
\end{rem}

\section{Definable functoriality of the homological spectrum}
\begin{slab}[Setup]\label{setup}
Let $\K$ and $\L$ be essentially small rigid tt-categories, and let $F\colon\Flat{\K}\to\Flat{\L}$ be a definable functor. We assume that $F$ has a left adjoint $\Lambda\colon \Flat{\L} \to \Flat{\K}$, and that this adjunction satisfies the projection formula. Explicitly, this means that $F$ is lax monoidal and the canonical map \[FX \otimes Y \xrightarrow{\sim} F(X \otimes \Lambda Y)\] is an isomorphism for all $X \in \Flat{\K}$ and $Y \in \Flat{\L}$.
\end{slab}

\begin{rem}\label{rem:barLambda}
    Given a definable functor $F\colon \Flat{\K} \to \Flat{\L}$, by \cite[Corollary 10.5]{Krauselfp} this functor extends to a definable functor $\overline{F}\colon \Mod{\K} \to \Mod{\L}$ which fits into an adjoint pair
\[
\begin{tikzcd}
\Mod{\K} \arrow[r, shift right = 0.5ex,swap,  "\bar{F}"] \arrow[r, leftarrow, shift left = 0.5ex, "\bar{\Lambda}"]& \Mod{\L}
\end{tikzcd}
\]
where $\bar{\Lambda}$ is exact and preserves finitely presented objects. As such, one can rephrase the requirement that $F$ has a left adjoint $\Lambda$ to the equivalent statement that $\bar{\Lambda}$ preserves flat objects.
\end{rem}

\begin{rem}
    The setup of starting with a definable functor $F\colon \Flat{\K} \to \Flat{\L}$ may appear odd at first glance. However, this is done to allow one to work with essentially small tt-categories. If $\K = \T^\c$ and $\L = \U^\c$ are the full subcategories of compact objects in some rigidly-compactly generated tt-categories $\T$ and $\U$, then one can instead start with the data of a definable functor $\T \to \U$ of triangulated categories, that is, a (not necessarily triangulated) functor which preserves coproducts and filtered homology colimits, or equivalently, coproducts, products, and pure triangles. Indeed, such a definable functor of triangulated categories yields a definable functor $\Flat{\K} \to \Flat{\L}$ by \cite[Theorem B]{BWdef}. 
\end{rem}

\begin{ex}\label{ex:geometric}
The canonical example satisfying the above setup comes from a geometric functor $f^*\colon\U \to \T$, that is, a coproduct preserving strong monoidal triangulated functor between rigidly-compactly generated tt-categories. The functor $f^*$ has a right adjoint $f_*\colon \T \to \U$, which is a definable functor of triangulated categories, see \cite[5.3]{BWdef}. By \cite[Theorems 4.16 and 4.21]{BWdef}, there is a definable functor $\widehat{f_*}\colon \Flat{\T^\c} \to \Flat{\U^\c}$ which extends to a functor $\overline{f_*}\colon \Mod{\T^\c} \to \Mod{\U^\c}$ which has a left adjoint $\overline{f^*}$. Since $\overline{f^*}$ commutes with the Yoneda embeddings, it preserves flats, and hence $\widehat{f_*}$ has a left adjoint $\widehat{f^*}$ given by the restriction of $\overline{f^*}$. The projection formula holds for the adjunction $(f^*,f_*)$ by \cite[Proposition 2.15]{BDS}, and since both $\widehat{f^*}$ and $\widehat{f_*}$ preserve direct limits, it follows that the projection formula holds for $(\widehat{f^*}, \widehat{f_*})$.
\end{ex}

\begin{ex}\label{ex:standard}
Suppose that $f\colon\scr{L}\to\scr{K}$ is an exact functor between essentially small rigid tt-categories. By identifying $\scr{K}$ with $\msf{proj}(\scr{K})$, and likewise for $\scr{L}$, we extend $f$ along direct limits to obtain a functor $\Lambda\colon\Flat{\scr{L}}\to\Flat{\scr{K}}$ which preserves direct limits. We may then apply \cite[Theorem 6.7]{Krauselfp} to obtain an adjoint pair
\[
\begin{tikzcd}
\Flat{\scr{K}} \arrow[r, shift right = 0.5ex, swap, "F"] \arrow[r, leftarrow, shift left = 0.5ex, "\Lambda"]& \Flat{\scr{L}}
\end{tikzcd}
\]
where $F$ is definable. Whenever $f$ is strong monoidal, so is $\Lambda$, and thus the adjunction satisfies the projection formula: the projection formula holds for the rigid objects by \cite[Proposition 3.2]{FHM}, but since the rigid objects are the finitely presented projectives we may take colimits to deduce it on all objects. Therefore in this case, we are in the situation of \cref{setup}. We note that this is the `small' version of \cref{ex:geometric}; indeed, we could deduce the previous example from this by taking $f$ to be the restriction of $f^*$ to compact objects.
\end{ex}

We now state our main result, and will assemble a proof throughout the rest of the section.

\begin{thm}\label{thm:functoriality}
Let $F\colon\Flat{\K}\to\Flat{\L}$ be a definable functor satisfying the hypotheses of \cref{setup}. Then the assignment
\[
\mc{B}\mapsto \msf{ker}(-\otimes FJ_{\mc{B}})\cap\mod{\L}
\]
defines a continuous map $\hspec{F}\colon\hspec{\K}\to\hspec{\L}$.
\end{thm}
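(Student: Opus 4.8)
The plan is to show that the assignment $\mc{B} \mapsto \msf{ker}(-\otimes FJ_{\mc{B}}) \cap \mod{\L}$ lands in $\hspec{\L}$ and is continuous. I break this into three tasks: (i) identify the proposed subcategory as a Serre $\otimes$-ideal of $\mod{\L}$; (ii) show it is \emph{maximal proper}, i.e. a genuine homological prime; (iii) verify continuity against the basis of closed sets $\supp^h(\ell)$.

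For (i), write $\mc{S}' \coloneqq \msf{ker}(-\otimes FJ_{\mc{B}}) \cap \mod{\L}$. The key structural input is the projection formula: for $Y \in \mod{\L}$ we have $FJ_\mc{B} \otimes Y \simeq F(J_\mc{B} \otimes \Lambda Y)$, and since $\bar\Lambda$ is exact and preserves finitely presented objects (\cref{rem:barLambda}), $\Lambda Y \in \mod{\K}$ whenever $Y \in \mod{\L}$ — here one must check $\bar\Lambda$ restricted to $\mod{\L}$ really lands in $\mod{\K}$, which follows since $\Lambda$ preserves flats and $\bar\Lambda$ preserves finitely presented objects. Thus $Y \in \mc{S}'$ iff $F(J_\mc{B} \otimes \Lambda Y) = 0$. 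That $\mc{S}'$ is a Serre subcategory of $\mod{\L}$ follows because $-\otimes FJ_\mc{B}$ is exact on the locally coherent category $\Mod{\L}$ (as $FJ_\mc{B}$ is flat, being the image under the definable — hence flat-preserving — functor $F$ of the injective, hence flat, object $J_\mc{B}$), so its kernel is a localising Serre subcategory of $\Mod{\L}$, and intersecting with $\mod{\L}$ gives a Serre subcategory. For the $\otimes$-ideal property: if $Y \in \mc{S}'$ and $g \in \mod{\L}$, then $FJ_\mc{B} \otimes (Y \otimes g) \simeq (FJ_\mc{B} \otimes Y) \otimes g = 0$. So $\mc{S}'$ is a Serre $\otimes$-ideal.

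For (ii) — which I expect to be the main obstacle — I want to recognise $\mc{S}'$ as arising from \cref{lem:recoverkernel} applied to $\K$... but there is a subtlety: $FJ_\mc{B}$ need not literally be $J_{\mc{S}'}$ for the Serre $\otimes$-ideal $\mc{S}'$ of $\mod{\L}$. The cleaner route is via the weak ring structure of \cref{weakrings}: $J_\mc{B}$ is a weak ring in $\Mod{\K}$ with unit $\eta\colon \yy\1 \to J_\mc{B}$, and applying the lax monoidal functor $F$ together with the projection formula should transport this to a weak ring structure on $FJ_\mc{B}$ in $\Mod{\L}$; the map $F\eta$ composed with the lax structure map $\yy\1 \to F\yy\1$ (which exists since $F$ is lax monoidal) gives the unit, and the splitting of $J_\mc{B} \otimes \eta$ maps forward, using the projection formula to identify $FJ_\mc{B} \otimes FJ_\mc{B}$ with $F(J_\mc{B} \otimes \Lambda FJ_\mc{B})$ appropriately — this step needs care. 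Once $FJ_\mc{B}$ is a (nonzero) weak ring, the kernel $\msf{ker}(-\otimes FJ_\mc{B})\cap \mod{\L}$ is a proper Serre $\otimes$-ideal (properness: $\yy\1 \notin \mc{S}'$ because $\yy\1 \otimes FJ_\mc{B} = FJ_\mc{B} \neq 0$). For maximality I would follow the strategy in \cite{BalmerNil, BKSrum}: weak rings have the property that their kernel is prime, or alternatively reduce to showing $\mc{S}'$ is the kernel of a "homological residue functor"; concretely, since $\mc{B}$ is a homological prime of $\K$, $J_\mc{B}$ detects a simple-enough structure that pulling back along $F$ keeps the kernel maximal. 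I would look to express $\mc{S}'$ as $\scr{S}(\scr{D}(\mc{B}))$-type data transported through the fundamental correspondence and use that $\Lambda^{-1}$ of a definable subcategory is definable.

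For (iii), continuity amounts to showing $\hspec{F}^{-1}(\supp^h(\ell))$ is closed for each $\ell \in \L$. Unwinding, $\mc{B} \in \hspec{F}^{-1}(\supp^h(\ell))$ iff $\yy\ell \notin \mc{S}'$ iff $FJ_\mc{B} \otimes \yy\ell \neq 0$ iff $F(J_\mc{B} \otimes \Lambda\yy\ell) \neq 0$ by the projection formula. Since $\Lambda$ preserves finitely presented projectives (indeed $\bar\Lambda\yy\ell$ is a finitely presented object of $\Mod\K$, and one checks it is a summand-retract situation or directly that $\Lambda\yy\ell \in \mod\K$), write $\Lambda\yy\ell$ via a finite presentation by representables in $\mod\K$. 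The condition $F(J_\mc{B} \otimes \Lambda \yy\ell) \neq 0$ should then be expressible as $\mc{B} \notin \bigcap_i \supp^h(\dots)$ or as membership in a finite union of sets $\supp^h(k_i)$, using \cref{lem:recoverkernel} to translate "$J_\mc{B} \otimes -$ kills $\Lambda\yy\ell$" into "$\Lambda\yy\ell \in \rlim\mc{B}$", hence into a Boolean combination of the basic closed sets $\supp^h(k)$ of $\hspec{\K}$; one then checks this Boolean combination is in fact closed (a finite union of basic closed sets), giving continuity. The alternative description of $\hspec{F}$ promised in \cref{cor:alternativeform} — in terms of $\Lambda^{-1}(\mc{B}) \cap \mod{\L}$ — is likely the technical heart making this translation transparent, so I would establish that identification first and derive continuity from it.
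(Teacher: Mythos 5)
Your step (i) is sound and your observation that $FJ_{\mc B}$ is a non-zero weak ring is correct (and is used by the paper). But there are two genuine gaps.

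The central one is in (ii): being a non-zero weak ring is not, by itself, enough to conclude that $\msf{ker}(-\otimes FJ_{\mc B})\cap\mod{\L}$ is a \emph{maximal} Serre $\otimes$-ideal. That kernel is a proper Serre $\otimes$-ideal, but properness is all the weak-ring structure gives you on the nose; for an arbitrary weak ring $R$ the ideal $\msf{ker}(-\otimes R)$ need not be maximal. Your fallback — ``express $\mc S'$ as $\scr S(\scr D(\mc B))$-type data transported through the fundamental correspondence'' — is the right instinct, but you don't supply the mechanism. The paper's mechanism is the lemma that a definable functor satisfying the setup sends a \emph{simple} $\otimes$-closed definable subcategory $\mc D$ to a simple one, i.e.\ $\msf{pure}(F\mc D)$ is again simple (\cref{lem:imagesimple}). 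This is proved by taking a non-zero $\otimes$-closed definable $\mc E\subseteq\msf{pure}(F\mc D)$, showing $\Lambda J_{\scr S(\mc E)}\otimes D\ne 0$ for some $D\in\mc D$ via the weak-ring structure and projection formula, and then using simplicity of $\mc D$ to force $\mc E=\msf{pure}(F\mc D)$. Combined with $\scr D(\mc B)=\msf{Def}^{\otimes}(J_{\mc B})$ and the identification $\msf{Def}^{\otimes}(FJ_{\mc B})=\msf{pure}(F\msf{Def}^{\otimes}(J_{\mc B}))$ (\cref{cor:defimage}), this is exactly what produces maximality. None of this is present in your sketch; it is the heart of the proof.

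The second gap is in (iii). You correctly unwind membership of $\supp^h(\ell)$ to the condition $F(J_{\mc B}\otimes\Lambda\yy\ell)\ne 0$, but then propose to translate this into a finite Boolean combination of $\supp^h(k_i)$'s and worry (rightly) that such combinations need not be closed. The clean step you are missing is a conservativity statement: because $J_{\mc B}$ is a weak ring, $F$ is conservative on objects of the form $\Lambda(f)\otimes J_{\mc B}$ (the paper cites \cite[Remark 13.12]{bchs}). This yields the equivalence $F(J_{\mc B}\otimes\Lambda\yy\ell)\ne 0 \iff J_{\mc B}\otimes\Lambda\yy\ell\ne 0$, and since $\Lambda$ preserves finitely presented objects in $\Flat{\K}$, i.e.\ $\Lambda\yy\ell\simeq\yy k$, the preimage is literally the single basic closed set $\supp^h(k)$. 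Finally, note your proposed dependency is reversed: the alternative description via $\overline{\Lambda}^{-1}(\mc B)\cap\mod{\L}$ is \cref{cor:alternativeform}, which the paper \emph{deduces from} \cref{thm:functoriality}, not the other way around, so you cannot lean on it to prove the theorem.
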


\begin{chunk}\label{chunk:imagedefinable}
    Given a definable subcategory $\mc{D} \subseteq \Flat{\K}$ and a definable functor $F\colon \Flat{\K} \to \Flat{\L}$ we write $\msf{pure}(F\mc{D})$ for the closure of $F\mc{D}$ under pure subobjects. Equivalently, this is the smallest definable subcategory of $\Flat{\L}$ containing $F\mc{D}$, see for example after Corollary 13.4 in \cite{dac}. 
\end{chunk}

We say that a $\otimes$-closed definable subcategory is \emph{simple} if it is non-zero, and contains no proper non-zero $\otimes$-closed definable subcategory.
\begin{lem}\label{lem:imagesimple}
Assume the setup of \cref{setup}. Let $\mc{D}\subseteq\Flat{\K}$ be a simple $\otimes$-closed definable subcategory. Then $\msf{pure}(F\mc{D})$ is also a simple $\otimes$-closed definable subcategory of $\Flat{\L}$.
\end{lem}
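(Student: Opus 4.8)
The plan is to unwind the term \emph{simple}: under the monoidal fundamental correspondence, a $\otimes$-closed definable subcategory $\mc{D}\subseteq\Flat{\K}$ is simple precisely when the associated Serre $\otimes$-ideal $\scr{S}(\mc{D})\subseteq\mod{\K}$ is maximal, i.e.\ $\mc{D}$ is minimal among nonzero $\otimes$-closed definable subcategories, equivalently $\mc{D}=\Def(X)$ for any nonzero $X\in\mc{D}$ (after closing under the monoidal action). First I would record that $\msf{pure}(F\mc{D})$ is again $\otimes$-closed definable: it is definable by \cref{chunk:imagedefinable}, and for $\otimes$-closedness it suffices by the remark in the monoidal fundamental correspondence to check $FX\otimes\yy\ell\in\msf{pure}(F\mc{D})$ for $X\in\mc{D}$ and $\ell\in\L$; but the projection formula gives $FX\otimes\yy\ell\simeq F(X\otimes\Lambda\yy\ell)$, and $\Lambda\yy\ell\in\Flat{\K}$ together with $\mc{D}$ being $\otimes$-closed under the $\Flat{\K}$-action (it is a $\otimes$-closed definable subcategory, hence closed under tensoring with all of $\Flat{\K}$) yields $X\otimes\Lambda\yy\ell\in\mc{D}$, so $FX\otimes\yy\ell\in F\mc{D}\subseteq\msf{pure}(F\mc{D})$. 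One also needs $\msf{pure}(F\mc{D})\neq 0$; this uses that $\mc{D}$ is simple hence nonzero, so it contains the weak ring $J_{\scr{S}(\mc{D})}$, whose image $FJ_{\scr{S}(\mc{D})}$ is again a weak ring by lax monoidality of $F$ and exactness/preservation properties — in particular nonzero — so $\msf{pure}(F\mc{D})\neq 0$.

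The heart of the argument is minimality. Suppose $\mc{E}\subseteq\msf{pure}(F\mc{D})$ is a nonzero $\otimes$-closed definable subcategory; I want $\mc{E}=\msf{pure}(F\mc{D})$. The strategy is to pull $\mc{E}$ back along $F$: set $\mc{D}'\coloneqq\{X\in\Flat{\K}: FX\in\mc{E}\}$. Since $F$ is definable (preserves direct limits, products) and pure monomorphisms, $\mc{D}'$ is a definable subcategory of $\Flat{\K}$; using the projection formula as above it is $\otimes$-closed. If I can show $\mc{D}'\neq 0$, then simplicity of $\mc{D}$ forces $\mc{D}'=\mc{D}$ (since $\mc{D}$ is the unique minimal nonzero $\otimes$-closed definable subcategory and $\mc{D}'\cap\mc{D}$ — or rather $\mc{D}'$ itself, once shown nonzero and $\otimes$-closed, contains a copy of $\mc{D}$); hence $F\mc{D}\subseteq\mc{E}$ and therefore $\msf{pure}(F\mc{D})\subseteq\mc{E}$, giving equality. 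So everything reduces to: \emph{$\mc{E}\neq 0$ implies some nonzero object of $\Flat{\K}$ maps into $\mc{E}$ under $F$}.

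The nonvanishing of $\mc{D}'$ is the main obstacle, and here is where I would use weak rings decisively. Since $\mc{D}$ is simple, by \cref{lem:recoverkernel} and the setup the object $J\coloneqq J_{\scr{S}(\mc{D})}$ is a nonzero weak ring lying in $\mc{D}$, and $\mc{D}=\Def(J)$ up to the $\otimes$-action; moreover $FJ$ is a weak ring in $\Flat{\L}$ (map $\yy\1\xrightarrow{}F\yy\1\xrightarrow{F\eta}FJ$ using lax monoidality, and $FJ\otimes(F\eta)$ splits because applying $F$ to the splitting of $J\otimes\eta$ and using the projection formula $FJ\otimes FY\hookleftarrow$ — more precisely $FJ\otimes F\yy\1$ sits inside $F(J\otimes\Lambda F\yy\1)$ — realises the retraction). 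A nonzero weak ring cannot lie in the zero definable subcategory, and in fact $\Def(FJ)$ being $\otimes$-closed and nonzero must meet $\mc{E}$: the key point is that $FJ\in\msf{pure}(F\mc{D})$ and the weak-ring structure propagates under the localisations cutting out $\mc{E}$, so $FJ$ is not killed by the localisation associated to $\mc{E}$, forcing $FJ\in\mc{E}$ (alternatively, $\mc{E}\cap\msf{pure}(F\mc{D})=\mc{E}$ is nonzero and $\otimes$-closed inside the minimal-type object $\msf{pure}(F\mc{D})$, and a minimality argument at the level of Serre $\otimes$-ideals of $\mod{\L}$ shows the Serre $\otimes$-ideal of $\mc{E}$ is the maximal one, i.e.\ $\mc{E}=\msf{pure}(F\mc{D})$ directly). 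Either route, the crux is showing the weak ring $FJ$ survives, which I expect to be the genuinely delicate step; the rest is bookkeeping with definable subcategories and the projection formula.
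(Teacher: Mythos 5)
Your outline gets the first half right (showing $\msf{pure}(F\mc{D})$ is $\otimes$-closed definable is the same argument as the paper's), but the heart of the proof --- the minimality step --- has a genuine gap that you yourself flag and then do not close. You introduce $\mc{D}'=\{X\in\Flat{\K}:FX\in\mc{E}\}$ and want to show it contains the weak ring $J=J_{\scr{S}(\mc{D})}$ sitting in $\mc{D}$, i.e.\ that $FJ\in\mc{E}$. The justification offered (``the weak-ring structure propagates under the localisations cutting out $\mc{E}$, so $FJ$ is not killed'') is not a valid inference: a nonzero weak ring need not lie in every nonzero $\otimes$-closed definable subcategory, and surviving the localisation $Q_{\scr{S}(\mc{E})}$ is precisely equivalent to being in $\mc{E}$ --- which is what you are trying to prove, making the argument circular. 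Unless $\msf{pure}(F\mc{D})$ is already known to be simple, there is no reason for $FJ$ to land in an arbitrary nonzero $\mc{E}\subseteq\msf{pure}(F\mc{D})$.

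There is a secondary concern: you assert $\mc{D}'$ is $\otimes$-closed ``using the projection formula as above,'' but the projection formula gives $FX\otimes Y\simeq F(X\otimes\Lambda Y)$ for $Y\in\Flat{\L}$, which lets you tensor with objects of the form $\Lambda Y$; it does not obviously give $F(X\otimes\yy k)\in\mc{E}$ for arbitrary $k\in\K$, so $\otimes$-closedness of $\mc{D}'$ needs an actual argument (or may fail).

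The paper avoids both problems by working with the weak ring associated to $\mc{E}$, namely $J_{\scr{S}(\mc{E})}$, rather than the one associated to $\mc{D}$. By construction $J_{\scr{S}(\mc{E})}\in\mc{E}\subseteq\msf{pure}(F\mc{D})$, hence there is a split monomorphism $J_{\scr{S}(\mc{E})}\hookrightarrow FD$ for some $D\in\mc{D}$. Tensoring with $J_{\scr{S}(\mc{E})}$ and using the projection formula together with the weak-ring property of $J_{\scr{S}(\mc{E})}$ shows $\Lambda J_{\scr{S}(\mc{E})}\otimes D\neq 0$ in $\mc{D}$; simplicity of $\mc{D}$ then gives $\msf{Def}^\otimes(\Lambda J_{\scr{S}(\mc{E})}\otimes D)=\mc{D}$, and one more application of the projection formula and $\otimes$-closedness of $\mc{E}$ yields $\msf{pure}(F\mc{D})\subseteq\mc{E}$. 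This sidesteps the need for a pullback subcategory $\mc{D}'$ entirely and never has to argue that a pre-chosen weak ring survives into $\mc{E}$. If you want to salvage your route, you would need to replace the unsupported ``$FJ\in\mc{E}$'' claim with the split-mono argument anchored at $J_{\scr{S}(\mc{E})}$.
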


\begin{proof}
Let us first prove that $\msf{pure}(F\mc{D})$ is a $\otimes$-closed definable subcategory of $\Flat{\L}$. The fact that it is definable is standard as recalled in \cref{chunk:imagedefinable}, so we must show that if $l\in\L$ and $X\in\msf{pure}(F\mc{D})$, then $\yy l\otimes X\in\msf{pure}(F\mc{D})$. As $X\in\msf{pure}(F\mc{D})$ there is a pure monomorphism $X\to FD$ for some $D\in\mc{D}$. The functor $\yy l \otimes -$ is definable so the induced map $\yy l\otimes X\to \yy l\otimes FD \simeq F(D \otimes \Lambda\yy l)$ is also a pure monomorphism. Consequently, as $D\otimes \Lambda \yy l\in\mc{D}$ we have $\yy l\otimes X \in \msf{pure}(F\mc{D})$ as claimed.

Let us now suppose that there is a $\otimes$-closed definable subcategory $0\neq\mc{E}\subseteq\msf{pure}(F\mc{D})$ in $\Flat{\L}$; we must show that $\mc{E} = \msf{pure}(F\mc{D})$. Consider the Serre $\otimes$-ideal $\scr{S}(\mc{E})\subseteq\mod{\L}$ and the associated injective object $J_{\scr{S}(\mc{E})}$. As $J_{\scr{S}(\mc{E})}\in\scr{D}(\scr{S}(\mc{E})) = \mc{E}$ by \cref{homprimes}, it follows that there is a pure (in fact split) monomorphism $\alpha\colon J_{\scr{S}(\mc{E})}\to FD$ for some $D\in\mc{D}$.

Consider the object $\Lambda J_{\scr{S}(\mc{E})}\otimes D\in\Flat{\K}$: this makes sense since $J_{\scr{S}(\mc{E})}$ is injective hence flat. Observe that $\Lambda J_{\scr{S}(\mc{E})}\otimes D\neq 0$: if it were zero, we would have $J_{\scr{S}(\mc{E})} \otimes FD \simeq F(\Lambda J_{\scr{S}(\mc{E})}\otimes D)=0$, and this cannot happen, since $J_{\scr{S}(\mc{E})}\otimes J_{\scr{S}(\mc{E})}$ is a summand of $J_{\scr{S}(\mc{E})}\otimes FD$ by the previous paragraph, and the former is non-zero as $J_{\scr{S}(\mc{E})}$ is a weak ring object, see \cref{weakrings}. 

As such, $\Lambda J_{\scr{S}(\mc{E})}\otimes D$ is a non-zero object in $\mc{D}$, and as $\mc{D}$ was assumed to be simple, we must have that $\msf{Def}^{\otimes}( \Lambda J_{\scr{S}(\mc{E})}\otimes D)=\mc{D}$. In other words, by \cref{slab:monoidalfundamental} we have
\[
\msf{Def}(\yy k\otimes \Lambda J_{\scr{S}(\mc{E})}\otimes D:k\in\K) = \mc{D},
\]
and as $F$ is a definable functor, we deduce that
\[
\msf{pure}(F\mc{D}) = \msf{Def}(F(\yy k\otimes \Lambda J_{\scr{S}(\mc{E})}\otimes D):k\in\K) = \msf{Def}(J_{\scr{S}(\mc{E})} \otimes F(\yy k \otimes D): k \in \K)
\]
where the latter equality follows from the projection formula.
Since $\mc{E}$ is a $\otimes$-closed definable subcategory and $J_{\scr{S}(\mc{E})} \in \mc{E}$, we have that $J_{\scr{S}(\mc{E})}\otimes F(\yy k\otimes D)\in\mc{E}$. Consequently, $\msf{pure}(F\mc{D}) \subseteq \mc{E}$, so that $\msf{pure}(F\mc{D}) = \mc{E}$, as required. Hence $\msf{pure}(F\mc{D})$ is simple as claimed.
\end{proof}

\begin{cor}\label{cor:defimage}
Assume the setup of \cref{setup} and let $\mc{B}\in\hspec{\K}$. Then $\msf{Def}^{\otimes}(FJ_{\mc{B}})=\msf{pure}(F\msf{Def}^{\otimes}(J_{\mc{B}}))$.
\end{cor}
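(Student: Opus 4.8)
The plan is to prove the two inclusions separately, extracting the less obvious one from the simplicity statement of \cref{lem:imagesimple}. I would begin by recording that $\msf{Def}^{\otimes}(J_{\mc B})$ is a \emph{simple} $\otimes$-closed definable subcategory of $\Flat{\K}$. Since $\mc B$ is a homological prime, i.e. a maximal proper Serre $\otimes$-ideal of $\mod{\K}$, the order-reversing bijection of \cref{fundamentalcorrespondence}, restricted to Serre $\otimes$-ideals and $\otimes$-closed definable subcategories as in the monoidal fundamental correspondence, identifies $\scr{D}(\mc B)$ with a minimal non-zero $\otimes$-closed definable subcategory, hence a simple one. As $J_{\mc B}\in\scr{D}(\mc B)$ and $J_{\mc B}\neq 0$ — it is a weak ring by \cref{weakrings}, so $J_{\mc B}\otimes J_{\mc B}\neq 0$ — simplicity forces $\msf{Def}^{\otimes}(J_{\mc B})=\scr{D}(\mc B)$. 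Applying \cref{lem:imagesimple} to this subcategory then shows that $\msf{pure}(F\msf{Def}^{\otimes}(J_{\mc B}))$ is again a simple $\otimes$-closed definable subcategory of $\Flat{\L}$.

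Next I would dispatch the inclusion $\msf{Def}^{\otimes}(FJ_{\mc B})\subseteq\msf{pure}(F\msf{Def}^{\otimes}(J_{\mc B}))$: we have $FJ_{\mc B}\in F\msf{Def}^{\otimes}(J_{\mc B})\subseteq\msf{pure}(F\msf{Def}^{\otimes}(J_{\mc B}))$, and the latter is a $\otimes$-closed definable subcategory, so it must contain the smallest such subcategory generated by $FJ_{\mc B}$. For the reverse inclusion, $\msf{Def}^{\otimes}(FJ_{\mc B})$ is a $\otimes$-closed definable subcategory contained in the simple subcategory $\msf{pure}(F\msf{Def}^{\otimes}(J_{\mc B}))$, so it is either zero or equal to it; it therefore remains only to verify $FJ_{\mc B}\neq 0$.

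For this last point I would show that $FJ_{\mc B}$ is itself a weak ring in $\Mod{\L}$, whence $FJ_{\mc B}\otimes FJ_{\mc B}\neq 0$ by \cref{weakrings} and in particular $FJ_{\mc B}\neq 0$. Writing $\eta\colon\yy\1\to J_{\mc B}$ for the structure map of the weak ring $J_{\mc B}$ and $u$ for the unit constraint of the lax monoidal structure on $F$, one takes $\eta'\coloneqq(F\eta)\circ u$ as candidate structure map for $FJ_{\mc B}$; chasing the naturality of the lax structure maps $\mu_{-,-}$ and the unit coherence identifies $\mu_{J_{\mc B},J_{\mc B}}\circ(FJ_{\mc B}\otimes\eta')$, up to a unitor, with $F(J_{\mc B}\otimes\eta)$, which is a split monomorphism because $F$ preserves split monomorphisms; hence $FJ_{\mc B}\otimes\eta'$ is a split monomorphism too, as required. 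This final step is the one I expect to be the main obstacle: the non-vanishing of $FJ_{\mc B}$ cannot be read off from the adjunction alone, since $F$ need not be faithful, so one genuinely has to transport the (weak) ring structure of $J_{\mc B}$ across the lax monoidal functor $F$, and that coherence bookkeeping is where the real work lies.
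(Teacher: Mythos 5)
Your proof is correct and takes essentially the same approach as the paper: establish that $\scr{D}(\mc{B})=\msf{Def}^{\otimes}(J_{\mc{B}})$ is simple, apply \cref{lem:imagesimple} to deduce simplicity of $\msf{pure}(F\msf{Def}^{\otimes}(J_{\mc{B}}))$, note the easy inclusion, and then conclude equality from simplicity once $FJ_{\mc{B}}\neq 0$, which both you and the paper obtain by transporting the weak ring structure across the lax monoidal functor $F$. The only difference is that you spell out the coherence diagram chase for why $FJ_{\mc{B}}$ is a weak ring, which the paper asserts without detail; your ordering of steps also differs slightly but the logical content is identical.
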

\begin{proof}
Firstly we note that the object $FJ_\mc{B}$ is non-zero: since $J_\mc{B}$ is a weak ring and $F$ is lax monoidal, $F J_\mc{B}$ is also a weak ring and hence non-zero. The $\otimes$-closed definable subcategory $\scr{D}(\mc{B})\subseteq\Flat{\K}$ is simple since the fundamental correspondence is order-reversing, see \cref{fundamentalcorrespondence} and \cref{slab:monoidalfundamental}. As $J_\mc{B} \in \scr{D}(\mc{B})$ by \cref{homprimes} and $\scr{D}(\mc{B})$ is simple, there is an equality 
\begin{equation}\label{eq:identityDofhomprime}
    \scr{D}(\mc{B})=\msf{Def}^{\otimes}(J_{\mc{B}}).
\end{equation}
As $\msf{Def}^\otimes(J_\mc{B})$ is simple, by \cref{lem:imagesimple}, so is $\msf{pure}(F\msf{Def}^{\otimes}(J_{\mc{B}}))$. It is clear that $FJ_{\mc{B}}\in \msf{pure}(F\msf{Def}^{\otimes}(J_{\mc{B}}))$, so there is an inclusion $\msf{Def}^{\otimes}(FJ_{\mc{B}})\subseteq \msf{pure}(F\msf{Def}^{\otimes}(J_{\mc{B}}))$, but the simplicity of the latter, and the former being non-zero, means the two classes are equal.
\end{proof}

We are now in a position to prove the main theorem.
\begin{proof}[Proof of \cref{thm:functoriality}]
Let $\mc{B}\in\hspec{\K}$ and consider the simple $\otimes$-closed definable subcategory $\scr{D}(\mc{B})\subseteq\Flat{\K}$. By \cref{lem:imagesimple}, \cref{cor:defimage} and \cref{eq:identityDofhomprime}, we have that $\msf{Def}^{\otimes}(FJ_{\mc{B}})$ is a simple $\otimes$-closed definable subcategory of $\Flat{\L}$, and thus by the monoidal fundamental correspondence there is a homological prime $\mc{S}\coloneqq \scr{S}(\msf{Def}^{\otimes}(FJ_{\mc{B}}))\in\hspec{\L}$. We define $\hspec{F}(\mc{B})\coloneqq\mc{S}$, and it remains to show that this has the form as claimed in the statement.

Observe that $\msf{Def}^{\otimes}(FJ_{\mc{B}})=\scr{D}(\mc{S})=\msf{Def}^{\otimes}(J_{\mc{S}})$ where the latter equality holds as in \cref{eq:identityDofhomprime}. For any $f\in\mod{\L}$, the functor $f\otimes -\colon\Mod{\L}\to\Mod{\L}$ is definable, so it follows that $f\otimes J_{\mc{S}}=0$ if and only if $f\otimes FJ_{\mc{B}}=0$. Thus \begin{equation}\label{eq:image}\mc{S}=\msf{ker}(-\otimes J_{\mc{S}})\cap\mod{\L}=\msf{ker}(-\otimes FJ_{\mc{B}})\cap\mod{\L}\end{equation} by \cref{lem:recoverkernel} as desired. 

For continuity, we show that for any $l\in\L$, the preimage of the basic closed set $\supp^{h}(l)$ under $\hspec{F}$ is closed. By the above, for any $\mc{B} \in \Spch(\K)$ there is an equality $\msf{Def}^{\otimes}(FJ_{\mc{B}})=\msf{Def}^{\otimes}(J_{\msf{\hspec{F}(\mc{B})}})$. Therefore, since $\yy l\otimes -$ is a definable functor, we have $\yy l\otimes FJ_{\mc{B}}=0$ if and only if $\yy l\otimes J_{\hspec{F}(\mc{B})}=0$, and
\[
\hspec{F}^{-1}(\supp^{h}(l))=\{\mc{B}\in\hspec{\K}:\yy l\otimes FJ_{\mc{B}}\neq 0\}. 
\]
By the projection formula, we have $\yy l \otimes FJ_{\mc{B}} \simeq F(\Lambda\yy l \otimes J_{\mc{B}})$.
Since $J_{\mc{B}}$ is a weak ring object, the functor $F$ is conservative on objects of the form $\Lambda(f)\otimes J_{\mc{B}}$ for any $f\in\Flat{\L}$ by \cite[Remark 13.12]{bchs}, and as such we have $\yy l\otimes FJ_{\mc{B}}\neq 0 $ if and only if $\Lambda \yy l\otimes J_{\mc{B}}\neq 0$. Being the left adjoint to a direct limit preserving functor, $\Lambda$ preserves finitely presented objects. Therefore $\Lambda \yy l\simeq \yy k$ for some $k\in\K$, and thus 
$\hspec{F}^{-1}(\supp^{h}(l))=\supp^{h}(k)$, which is a closed set, as required.
\end{proof} 

We note that one can also give an alternative description of the continuous map $\Spch(F)$. Recall the functor $\overline{\Lambda}\colon \Mod{\L} \to \Mod{\K}$ from \cref{rem:barLambda}.
\begin{cor}\label{cor:alternativeform}
    Let $F\colon \Flat{\K} \to \Flat{\L}$ be a definable functor as in \cref{setup}. Then the continuous map $\Spch(F)\colon \Spch(\K) \to \Spch(\L)$ of \cref{thm:functoriality} takes the form
    \[\Spch(F)(\mc{B}) = \overline{\Lambda}^{-1}(\mc{B}) \cap \mod{\L}\] for all $\mc{B} \in \Spch(\K).$
\end{cor}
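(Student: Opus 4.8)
The plan is to show that the two descriptions of $\Spch(F)(\mc{B})$ — the one from \cref{thm:functoriality}, namely $\msf{ker}(-\otimes FJ_{\mc{B}})\cap\mod{\L}$, and the claimed one $\overline{\Lambda}^{-1}(\mc{B})\cap\mod{\L}$ — agree as Serre $\otimes$-ideals of $\mod{\L}$. Since both are subsets of $\mod{\L}$, it suffices to check that for $f\in\mod{\L}$ one has $f\otimes FJ_{\mc{B}}=0$ if and only if $\overline{\Lambda}(f)\in\mc{B}$.

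First I would unwind the right-hand side. By \cref{lem:recoverkernel} applied to the homological prime $\mc{B}$ of $\K$, we have $\mc{B}=\msf{ker}(-\otimes J_{\mc{B}})\cap\mod{\K}$, so for $f\in\mod{\L}$ the condition $\overline{\Lambda}(f)\in\mc{B}$ is equivalent to $\overline{\Lambda}(f)\otimes J_{\mc{B}}=0$ — here I use that $\overline{\Lambda}$ preserves finitely presented objects (\cref{rem:barLambda}), so $\overline{\Lambda}(f)\in\mod{\K}$ and the intersection with $\mod{\K}$ is automatic. Next I would connect $\overline{\Lambda}(f)\otimes J_{\mc{B}}$ to $f\otimes FJ_{\mc{B}}$ via the projection formula. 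The adjunction $(\overline{\Lambda},\overline{F})$ on module categories restricts to $(\Lambda,F)$ on the flat subcategories, and the projection formula $FX\otimes Y\xrightarrow{\sim}F(X\otimes\Lambda Y)$ of \cref{setup} is the statement we want, applied with $X=J_{\mc{B}}\in\Flat{\K}$ (it is injective, hence flat) and $Y=f$. One subtlety: $f$ need not be flat, so I would first reduce to flat objects — either by resolving $f$ by finitely presented projectives $\yy l'\to\yy l\to f\to 0$ and using right-exactness of both $-\otimes FJ_{\mc{B}}$ and $\overline{\Lambda}(-)\otimes J_{\mc{B}}$ together with the projection formula on the Yoneda objects (where $\overline{\Lambda}\yy l\simeq\yy k$ by compatibility with Yoneda, as in \cref{ex:standard}), or by invoking that the projection formula extends to all of $\Mod{\L}$ since both sides are right-exact and agree on a generating set of compact projectives. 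This gives $\overline{\Lambda}(f)\otimes J_{\mc{B}}\simeq F(f\otimes J_{\mc{B}})$... — more precisely $\overline{F}(\overline{\Lambda}(f)\otimes J_{\mc{B}})\simeq f\otimes \overline{F}J_{\mc{B}}=f\otimes FJ_{\mc{B}}$, reading the projection formula in the direction $\overline{F}(\overline{\Lambda}(f)\otimes X)\simeq f\otimes\overline{F}X$.

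Now $f\otimes FJ_{\mc{B}}=0$ clearly follows from $\overline{\Lambda}(f)\otimes J_{\mc{B}}=0$. For the converse I would use conservativity: $J_{\mc{B}}$ is a weak ring (\cref{weakrings}), so by \cite[Remark 13.12]{bchs} the functor $F$ (equivalently $\overline{F}$, or rather its restriction) is conservative on objects of the form $\overline{\Lambda}(f)\otimes J_{\mc{B}}$, exactly as used at the end of the proof of \cref{thm:functoriality}. Hence $\overline{F}(\overline{\Lambda}(f)\otimes J_{\mc{B}})=0$ forces $\overline{\Lambda}(f)\otimes J_{\mc{B}}=0$, completing the equivalence and therefore the identification of the two Serre $\otimes$-ideals. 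I expect the only real friction to be the bookkeeping around extending the projection formula from flat (or finitely presented projective) objects to the not-necessarily-flat $f\in\mod{\L}$; everything else is a direct assembly of \cref{lem:recoverkernel}, \cref{setup}, and the conservativity input already deployed in the proof of the main theorem.
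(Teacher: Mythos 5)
Your overall plan---characterising $\Spch(F)(\mc{B})$ by testing finitely presented objects $f$ and showing $f\otimes FJ_{\mc{B}}=0 \iff \overline{\Lambda}(f)\otimes J_{\mc{B}}=0$---is a reasonable pointwise reformulation, and the pivot via \cref{lem:recoverkernel} on the right-hand side is correct. However, the step you flag as ``friction'' is in fact a genuine gap, and the way you propose to close it does not work. You want to extend the projection formula to $\overline{F}(\overline{\Lambda}(f)\otimes J_{\mc{B}})\simeq f\otimes \overline{F}J_{\mc{B}}$ for arbitrary $f\in\mod{\L}$ by arguing that ``both sides are right-exact and agree on a generating set of compact projectives.'' But $\overline{F}$ is the \emph{right} adjoint in the adjunction $(\overline{\Lambda},\overline{F})$, hence only left exact; being definable it preserves direct limits and products, but it need not preserve cokernels. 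So the functor $\overline{F}(\overline{\Lambda}(-)\otimes J_{\mc{B}})$ is not right exact, and the comparison-on-generators argument collapses. The alternative you sketch (resolving $f$ by $\yy l'\to\yy l\to f\to 0$) hits the same wall: applying $\overline{F}$ to the exact sequence $\overline{\Lambda}\yy l'\otimes J_{\mc{B}}\to\overline{\Lambda}\yy l\otimes J_{\mc{B}}\to\overline{\Lambda}f\otimes J_{\mc{B}}\to 0$ does not give a right-exact sequence.

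There is a second, related issue with the converse direction. The conservativity input from \cite[Remark~13.12]{bchs} is used in the proof of \cref{thm:functoriality} for the functor $F$ on objects $\Lambda(g)\otimes J_{\mc{B}}$ with $g\in\Flat{\L}$; those objects are flat, so they live in the domain of $F$. You need conservativity of the \emph{extended} functor $\overline{F}$ on $\overline{\Lambda}(f)\otimes J_{\mc{B}}$ for $f\in\mod{\L}$ not necessarily flat, i.e.\ on objects outside $\Flat{\K}$. That is not the statement being cited, and it would require a separate argument. The paper avoids both of these problems entirely by not attempting a pointwise projection formula: its proof is a four-line chain of identities combining \cref{eq:image}, \cref{cor:defimage}, and \cref{eq:identityDofhomprime} with a cited result (\cite[Corollary~4.13]{BWdef}) relating $\scr{S}(\msf{pure}(F\mc{D}))$ to $\overline{\Lambda}^{-1}(\scr{S}(\mc{D}))$ at the level of definable and Serre subcategories. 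If you want to salvage a direct argument, you would need to either establish the projection formula isomorphism $\overline{F}(\overline{\Lambda}(f)\otimes J_{\mc{B}})\simeq f\otimes FJ_{\mc{B}}$ by some other means, or replace it with the subcategory-level statement that the paper relies on.
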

\begin{proof}
    We have
    \begin{align*}
    \msf{ker}(-\otimes FJ_{\mc{B}})\cap\mod{\L} &= \scr{S}(\msf{Def}^\otimes(FJ_\mc{B})) & \text{by \cref{eq:image}} \\
    &= \scr{S}(\msf{pure}(F\msf{Def}^\otimes(J_\mc{B}))) &\text{by \cref{cor:defimage}} \\
    &= \overline{\Lambda}^{-1}(\scr{S}(\msf{Def}^\otimes(J_\mc{B}))) \cap \mod{\L} &\text{by \cite[Corollary 4.13]{BWdef}} \\
    &= \overline{\Lambda}^{-1}(\mc{B}) \cap \mod{\L} &\text{by \cref{eq:identityDofhomprime}}
    \end{align*}
    as required.
\end{proof}

\begin{slab}[A comparison with Balmer's functoriality]
There is already a notion of functoriality for the homological spectrum and we now show that the functoriality of \cref{thm:functoriality} extends this result. Let $f^*\colon\U \to \T$ be a geometric functor between rigidly-compactly generated tt-categories. By \cref{ex:geometric} this fits into the setting of \cref{setup}, so we may apply \cref{thm:functoriality} together with \cref{cor:alternativeform} to recover the following result of Balmer~\cite[Theorem 5.10]{Balmerhomsupp}:
\begin{cor}\label{cor:generalise}
    Let $f^*\colon \U \to \T$ be a geometric functor between rigidly-compactly generated tt-categories. There is a continuous map $\Spch(\T^\c) \to \Spch(\U^\c)$ given by the assignment $\mc{B} \mapsto (\overline{f^*})^{-1}(\mc{B}) \cap \mod{\U^\c}$. \qed
\end{cor}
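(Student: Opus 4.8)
The plan is to obtain this as a direct specialisation of \cref{thm:functoriality} and \cref{cor:alternativeform} to the geometric case, so there is essentially nothing new to prove — the content is the verification that a geometric functor does fit into the framework, and that verification has already been packaged in \cref{ex:geometric}.

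First I would recall from \cref{ex:geometric} that, given a geometric functor $f^*\colon \U \to \T$, the right adjoint $f_*$ restricts to a definable functor $\widehat{f_*}\colon \Flat{\T^\c} \to \Flat{\U^\c}$, which extends to $\overline{f_*}\colon \Mod{\T^\c} \to \Mod{\U^\c}$ admitting the exact left adjoint $\overline{f^*}$; since $\overline{f^*}$ commutes with the Yoneda embeddings it preserves flats, so $\widehat{f_*}$ has a left adjoint $\widehat{f^*}$ given by restricting $\overline{f^*}$. The projection formula for $(f^*, f_*)$ descends to $(\widehat{f^*}, \widehat{f_*})$ because both functors preserve direct limits. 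Hence $F \coloneqq \widehat{f_*}$ satisfies the hypotheses of \cref{setup}, with $\Lambda = \widehat{f^*}$ and $\overline{\Lambda} = \overline{f^*}$.

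Then I would simply invoke \cref{thm:functoriality} to get a continuous map $\Spch(F) \colon \Spch(\T^\c) \to \Spch(\U^\c)$, and apply \cref{cor:alternativeform} to rewrite it in the stated form $\mc{B} \mapsto \overline{\Lambda}^{-1}(\mc{B}) \cap \mod{\U^\c} = (\overline{f^*})^{-1}(\mc{B}) \cap \mod{\U^\c}$. This is exactly the assertion.

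There is no real obstacle here; the only thing to be careful about is the direction of the arrows — the definable functor runs $\Flat{\T^\c} \to \Flat{\U^\c}$ (built from the right adjoint $f_*$) while the geometric functor $f^*$ runs $\U \to \T$, and it is $\overline{f^*}$ (the extension of the \emph{left} adjoint, equivalently $\overline{\Lambda}$) whose preimage appears in the formula. This bookkeeping is precisely what was set up in \cref{ex:geometric} and \cref{rem:barLambda}, so once those identifications are in place the corollary is immediate, and the proof can be dispatched with a \qed.
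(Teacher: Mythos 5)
Your proposal is correct and follows exactly the paper's route: the corollary is presented with an immediate \qed because the preceding paragraph already notes that a geometric functor fits into \cref{setup} via \cref{ex:geometric}, so one simply applies \cref{thm:functoriality} and \cref{cor:alternativeform} with $F=\widehat{f_*}$, $\Lambda=\widehat{f^*}$, and $\overline{\Lambda}=\overline{f^*}$. Your care about the direction of the arrows matches the identifications already made in \cref{ex:geometric} and \cref{rem:barLambda}.
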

\end{slab}

\begin{slab}[Induced functoriality on the Balmer spectrum]
As a consequence of \cref{thm:functoriality}, we also obtain functoriality of the Balmer spectrum under the same assumptions:
\begin{thm}\label{thm:Balmerspectrumfunctoriality}
    Let $F\colon \Flat{\K} \to \Flat{\L}$ be a definable functor satisfying the hypotheses of \cref{setup}. There is a continuous map on Balmer spectra
    \[\Spc(F)\colon \Spc(\K) \to \Spc(\L)\]
    induced by taking the Kolmogorov quotient.
\end{thm}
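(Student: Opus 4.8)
The plan is to deduce \cref{thm:Balmerspectrumfunctoriality} from \cref{thm:functoriality} by passing to Kolmogorov quotients, using the known comparison between the homological and Balmer spectra. Recall from \cite{BalmerNil} (or the surrounding literature on the homological spectrum in this essentially small setting) that there is a natural comparison map $\phi_\K\colon \hspec{\K} \to \Spc(\K)$ which is a continuous, surjective map that exhibits $\Spc(\K)$ as a topological quotient of $\hspec{\K}$; more precisely, $\Spc(\K)$ is the Kolmogorov quotient of the image of $\phi_\K$, and in good cases (e.g.\ when the homological conjecture holds) $\phi_\K$ itself is a homeomorphism onto its image after Kolmogorov collapse. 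The key point is that $\phi_\K$ is defined on the level of the basic closed sets: $\phi_\K(\mc{B})$ is the Balmer prime $\{k \in \K : \yy k \otimes J_\mc{B} = 0\}$ (equivalently, the prime whose complement is determined by the supports $\supp^h$), and the closed sets $\supp^h(k)$ on $\hspec{\K}$ are exactly the preimages under $\phi_\K$ of the basic closed sets $\supp(k)$ on $\Spc(\K)$.

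First I would record that $\Spc(F)$ is forced to be the unique map making the square
\[
\begin{tikzcd}
\hspec{\K} \arrow[r, "\hspec{F}"] \arrow[d, "\phi_\K"'] & \hspec{\L} \arrow[d, "\phi_\L"] \\
\Spc(\K) \arrow[r, "\Spc(F)", dashed] & \Spc(\L)
\end{tikzcd}
\]
commute, so the real content is (i) that such a map exists at the level of sets, i.e.\ $\phi_\L \circ \hspec{F}$ factors through $\phi_\K$, and (ii) that the resulting map is continuous. For existence, I would show that $\hspec{F}$ respects the equivalence relation ``$\phi_\K(\mc{B}) = \phi_\K(\mc{B}')$'', i.e.\ that $\phi_\L(\hspec{F}(\mc{B}))$ depends only on $\phi_\K(\mc{B})$. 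Concretely, $\phi_\L(\hspec{F}(\mc{B}))$ is the Balmer prime $\{l \in \L : \yy l \otimes FJ_\mc{B} = 0\}$, and by the projection formula together with the conservativity statement used at the end of the proof of \cref{thm:functoriality} (namely \cite[Remark 13.12]{bchs}, since $J_\mc{B}$ is a weak ring), we have $\yy l \otimes FJ_\mc{B} = 0 \iff F(\Lambda \yy l \otimes J_\mc{B}) = 0 \iff \Lambda\yy l \otimes J_\mc{B} = 0$. Writing $\Lambda \yy l \simeq \yy k$ for the corresponding compact object $k \in \K$ (as $\Lambda$ preserves finitely presented objects, already used in the proof of \cref{thm:functoriality}), this says $l \in \phi_\L(\hspec{F}(\mc{B})) \iff k \in \phi_\K(\mc{B})$. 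Hence membership in $\phi_\L(\hspec{F}(\mc{B}))$ is completely determined by $\phi_\K(\mc{B})$, giving a well-defined set map $\Spc(F)\colon \Spc(\K) \to \Spc(\L)$ with $\Spc(F) \circ \phi_\K = \phi_\L \circ \hspec{F}$. In fact this identifies the formula: $\Spc(F)(\p) = \{l \in \L : \Lambda\yy l \in \p\}$, i.e.\ $\Spc(F) = \Spc(\Lambda|_{\text{compacts}})$ is just the usual Balmer-spectrum functoriality of the exact monoidal functor underlying $\Lambda$ on compact objects — which also makes the continuity transparent.

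For continuity I would argue directly from the factorisation. Since $\phi_\K$ is a quotient map onto its image and $\Spc(\K)$ is the Kolmogorov quotient of that image, a subset $Z \subseteq \Spc(\L)$ is closed iff $\phi_\L^{-1}(Z)$ is closed in $\hspec{\L}$ (the Kolmogorov quotient map being an identification), and $\Spc(F)^{-1}(Z)$ is closed iff its $\phi_\K$-preimage $\phi_\K^{-1}(\Spc(F)^{-1}(Z)) = \hspec{F}^{-1}(\phi_\L^{-1}(Z))$ is closed in $\hspec{\K}$ — which holds because $\hspec{F}$ is continuous by \cref{thm:functoriality} and $\phi_\L$ is continuous. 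Alternatively, and more concretely, the computation above shows $\Spc(F)^{-1}(\supp(l)) = \supp(k)$ for the basic closed set $\supp(l) = \{\p : \yy l \notin \p\}$ and the object $k$ with $\Lambda\yy l \simeq \yy k$, exactly mirroring the identity $\hspec{F}^{-1}(\supp^h(l)) = \supp^h(k)$ established in the proof of \cref{thm:functoriality}; since the $\supp(l)$ form a basis of closed sets for $\Spc(\L)$, this proves continuity.

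The main obstacle is bookkeeping about the comparison map $\phi_\K\colon \hspec{\K} \to \Spc(\K)$ in the essentially small setting of this paper: one needs it to be a well-defined continuous surjection whose Kolmogorov quotient is $\Spc(\K)$, and whose fibres are detected by the families of supports, so that ``$\hspec{F}$ descends'' is literally the statement that $\hspec{F}$ carries $\phi_\K$-fibres into $\phi_\L$-fibres. This is the content of \cite{BalmerNil} (reformulated small, as flagged in the remark after \cref{homprimes} via \cite{BWHomSp}), so I would cite it rather than reprove it; once it is in hand, everything else is the short diagram-chase and the already-established conservativity/projection-formula manipulation above. A cleaner packaging, which I would probably adopt for the write-up, is simply to observe that $\Spc(F)$ coincides with $\Spc$ applied to the exact tensor functor $\Lambda$ restricted to $\msf{proj}(\L) \simeq \L \to \msf{proj}(\K) \simeq \K$, so that continuity is Balmer's $\Spc(-)$ functoriality \cite{BalmerSpec} and the only thing requiring the machinery of this section is the compatibility with $\hspec{F}$ under the comparison maps.
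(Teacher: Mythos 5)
Your approach is essentially the same as the paper's: pass to Kolmogorov quotients and use the comparison map $\yy^{-1}\colon \Spch(\K)\to\Spc(\K)$, which (as the paper cites from \cite[Lemma 4.2]{BHScomp}) exhibits $\Spc(\K)$ precisely as the Kolmogorov quotient of $\Spch(\K)$. The paper then just invokes functoriality of the Kolmogorov quotient construction to conclude immediately, whereas you re-verify the descent by hand (factoring $\phi_\L\circ\Spch(F)$ through $\phi_\K$ via the projection formula and conservativity), which is correct but makes you hedge unnecessarily about whether the comparison map is a homeomorphism-onto-image after Kolmogorov collapse --- the cited lemma removes that worry. Your added observation that $\Spc(F)$ is just $\Spc$ applied to $\Lambda$ restricted to $\msf{proj}(\L)\simeq\L$ is a nice reformulation and is in fact recorded in the remark immediately following the theorem in the paper.
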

\begin{proof}
    By \cite[Lemma 4.2]{BHSComp}, the map $\yy^{-1}\colon \Spch(\K) \to \Spc(\K)$ exhibits $\Spc(\K)$ as the Kolmogorov quotient of $\Spch(\K)$. Since the Kolmogorov quotient is functorial, applying it to the continuous map $\Spch(\K) \to \Spch(\L)$ of \cref{thm:functoriality} yields the claim.
\end{proof}
\end{slab}

\begin{rem}
    In the case when $F$ arises from an exact, strong monoidal functor $f\colon \L \to \K$ as in \cref{ex:standard}, it follows from \cref{cor:alternativeform} that the continuous map $\Spc(\K) \to \Spc(\L)$ of the previous theorem is given by $\p \mapsto f^{-1}(\p)$. In particular, we recover the usual functoriality of the Balmer spectrum.
\end{rem}

\bibliographystyle{abbrv}
\bibliography{references.bib}
\end{document}